\DeclareMathOperator{\Ann}{Ann}
\DeclareMathOperator{\GL}{GL}
\DeclareMathOperator{\KerInt}{KerInt}
\DeclareMathOperator{\invlim}{\varprojlim}
\DeclareMathOperator{\Ker}{Ker}
\begin{document}

\newtheorem{thm}{Theorem}[section]
\newtheorem{cor}[thm]{Corollary}
\newtheorem{lem}[thm]{Lemma}
\newtheorem{prop}[thm]{Proposition}
\newtheorem{defin}[thm]{Definition}
\newtheorem{exam}[thm]{Example}
\newtheorem*{examples}{Examples}
\newtheorem{rem}[thm]{Remark}
\newtheorem{case}{\sl Case}
\newtheorem{claim}{Claim}
\newtheorem{prt}{Part}
\newtheorem*{mainthm}{Main Theorem}
\newtheorem*{thmm}{Theorem}
\newtheorem{question}[thm]{Question}
\newtheorem*{notation}{Notation}
\swapnumbers
\newtheorem{rems}[thm]{Remarks}
\newtheorem*{acknowledgment}{Acknowledgment}
\newtheorem{questions}[thm]{Questions}
\numberwithin{equation}{section}

\newcommand{\ab}{\mathrm{ab}}
\newcommand{\cont}{\mathrm{cont}}
\newcommand{\dirlim}{\varinjlim}
\newcommand{\discup}{\ \ensuremath{\mathaccent\cdot\cup}}
\newcommand{\divis}{\mathrm{div}}
\newcommand{\nek}{,\ldots,}
\newcommand{\inv}{^{-1}}
\newcommand{\isom}{\cong}
\newcommand{\pr}{\mathrm{pr}}
\newcommand{\sep}{\mathrm{sep}}
\newcommand{\tensor}{\otimes}
\newcommand{\alp}{\alpha}
\newcommand{\gam}{\gamma}
\newcommand{\Gam}{\Gamma}
\newcommand{\del}{\delta}
\newcommand{\Del}{\Delta}
\newcommand{\eps}{\epsilon}
\newcommand{\lam}{\lambda}
\newcommand{\Lam}{\Lambda}
\newcommand{\sig}{\sigma}
\newcommand{\Sig}{\Sigma}
\newcommand{\bfA}{\mathbf{A}}
\newcommand{\bfB}{\mathbf{B}}
\newcommand{\bfC}{\mathbf{C}}
\newcommand{\bfF}{\mathbf{F}}
\newcommand{\bfP}{\mathbf{P}}
\newcommand{\bfQ}{\mathbf{Q}}
\newcommand{\bfR}{\mathbf{R}}
\newcommand{\bfS}{\mathbf{S}}
\newcommand{\bfT}{\mathbf{T}}
\newcommand{\bfZ}{\mathbf{Z}}
\newcommand{\dbA}{\mathbb{A}}
\newcommand{\dbC}{\mathbb{C}}
\newcommand{\dbF}{\mathbb{F}}
\newcommand{\dbN}{\mathbb{N}}
\newcommand{\dbQ}{\mathbb{Q}}
\newcommand{\dbR}{\mathbb{R}}
\newcommand{\dbU}{\mathbb{U}}
\newcommand{\dbZ}{\mathbb{Z}}
\newcommand{\grf}{\mathfrak{f}}
\newcommand{\gra}{\mathfrak{a}}
\newcommand{\grm}{\mathfrak{m}}
\newcommand{\grp}{\mathfrak{p}}
\newcommand{\grq}{\mathfrak{q}}
\newcommand{\calA}{\mathcal{A}}
\newcommand{\calB}{\mathcal{B}}
\newcommand{\calC}{\mathcal{C}}
\newcommand{\calE}{\mathcal{E}}
\newcommand{\calG}{\mathcal{G}}
\newcommand{\calH}{\mathcal{H}}
\newcommand{\calJ}{\mathcal{J}}
\newcommand{\calK}{\mathcal{K}}
\newcommand{\calL}{\mathcal{L}}
\newcommand{\calW}{\mathcal{W}}
\newcommand{\calV}{\mathcal{V}}

\title{Filtrations of free groups as intersections}

\author{Ido Efrat}
\address{Mathematics Department\\
Ben-Gurion University of the Negev\\
P.O.\ Box 653, Be'er-Sheva 84105\\
Israel} \email{efrat@math.bgu.ac.il}

\thanks{This work was supported by the Israel Science Foundation (grant No.\ 152/13).}

\keywords{lower central filtration, lower $p$-central filtration, Zassenhaus filtration, profinite groups, upper-triangular unipotent representations}

\subjclass[2010]{Primary 20E05 Secondary 20E18}

\maketitle

\begin{abstract}
For several natural filtrations of a free group $S$ we express the $n$-th term of the filtration
as the intersection of all kernels of homomorphisms from $S$ to certain groups of upper-triangular unipotent matrices.
This generalizes a classical result of Gr\"un for the lower central filtration.
In particular, we do this for the $n$-th term  in the lower $p$-central filtration of $S$.
\end{abstract}

\section{Introduction}
We consider a group $G$ and decreasing filtrations $G_i$, $i=1,2\nek$ of $G$ by normal subgroups such that $G_1=G$ and $[G_i,G_j]\leq G_{i+j}$ for every $i,j\geq1$.
The \textbf{lower central filtration} $G^{(i)}$ is the fastest such filtration.
Next to it, for a prime number $p$, we have the \textbf{$p$-Zassenhaus filtration} $G_{(i,p)}$
and the \textbf{lower $p$-central filtration} $G^{(i,p)}$, which are the fastest filtrations as above
such that, in addition, $G_i^p\leq G_{ip}$, resp., $G_i^p\leq G_{i+1}$, for every $i\geq1$.
Here, for subgroups $H,K$ of $G$ we write as usual $[H,K]$ (resp., $H^p$, $HK$) for the subgroup generated
by all elements $[h,k]=h\inv k\inv hk$ (resp., $h^p$, $hk$) with $h\in H$ and $k\in K$.
More concretely, we define inductively:
\begin{enumerate}
\item[(i)]
$G^{(1)}=G$, \  $G^{(i)}=[G,G^{(i-1)}]$  for $i\geq2$;
\item[(ii)]
$G_{(1,p)}=G$, \ $G_{(i,p)}=(G_{(\lceil i/p\rceil,p)})^p\prod_{j+l=i}[G_{(j,p)},G_{(l,p)}]$ for $i\geq2$;
\item[(iii)]
$G^{(1,p)}=G$, \ $G^{(i,p)}=(G^{(i-1,p)})^p[G,G^{(i-1,p)}]$ for $i\geq2$.
\end{enumerate}
(See \cite{DixonDuSautoyMannSegal99}*{p.\ 5 and Prop.\ 1.16(2)} for the condition about the commutators in (i) and (iii)).
The $p$-Zassenhaus filtration is also called the \textbf{$p$-modular dimension filtration} \cite{DixonDuSautoyMannSegal99}*{Ch.\ XI}.

These filtrations also have their natural profinite analogs, where the subgroups are taken to be closed.

When $G=S$ is a free group, the subgroups $S^{(n)}$ (in the discrete case) and $S_{(n,p)}$ (in the profinite case) have known alternative descriptions in terms of linear representations.
Namely for a unital commutative ring $R$ let $\dbU_n(R)$ be the group of
all upper-triangular unipotent $n\times n$ matrices over $R$.
Then:

(1) \quad
When $S$ is a free discrete group on finitely many generators, $S^{(n)}=\bigcap\Ker(\varphi)$, where the intersection is over all group homomorphisms $\varphi\colon S\to \dbU_n(\dbZ)$ (Gr\"un \cite{Grun36}).

\medskip

(2) \quad
When $S$ is a free profinite group, $S_{(n,p)}=\bigcap\Ker(\varphi)$, where the intersection is over all continuous group homomorphisms $\varphi\colon S\to \dbU_n(\dbF_p)$
(this is a special case of \cite{Efrat14}, which is proved under a more general cohomological assumption on $n$-fold Massey products).

In this note we prove a general intersection theorem for free groups (Theorem \ref{theorem qqq}) which gives similar results in a variety of situations, both in the discrete and the profinite case, including (1) and (2) above.
 Moreover, it gives as a special case an analogous intersection theorem for the filtration $S^{(n,p)}$.
Namely, let $G(n,p)$ be  the group of all upper-triangular unipotent $n\times n$ matrices $(a_{ij})$
over $\dbZ/p^n\dbZ$ such that $a_{ij}\in p^{j-i}\dbZ/p^n\dbZ$ for every $i\leq j$.

\begin{thmm}
One has $S^{(n,p)}=\bigcap\Ker(\varphi)$,
where the intersection is over all group homomorphisms $\varphi\colon S\to G(n,p)$.
\end{thmm}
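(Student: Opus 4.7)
The plan is to deduce this from Theorem \ref{theorem qqq} by exhibiting a natural decreasing filtration on $G(n,p)$ that fits the hypotheses of that general theorem. Concretely, for $1\leq i\leq n$ I would define
\[
F_i \;=\; \{I+N\in G(n,p) \,:\, v_p(N_{kl})\geq\max(i,l-k)\ \text{for all}\ 1\leq k<l\leq n\},
\]
where $v_p$ denotes the $p$-adic valuation on $\dbZ/p^n\dbZ$, with the convention $v_p(0)=\infty$. Directly from the definition, $F_1=G(n,p)$, $F_n=\{I\}$, and each $F_i$ is a normal subgroup.

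The technical substance lies in verifying the two filtration axioms $[F_i,F_j]\subseteq F_{i+j}$ and $F_i^p\subseteq F_{i+1}$. Both reduce to the entrywise lemma that whenever $v_p(M_{km})\geq\max(a,m-k)$ and $v_p(N_{ml})\geq\max(b,l-m)$, one has $v_p((MN)_{kl})\geq\max(a+b,l-k)$, which follows from a short case split on whether $m-k\geq a$ and $l-m\geq b$. The commutator estimate then rests on the rearrangement
\[
[u,v]-I \;=\; (I+X)^{-1}(AB-BA), \qquad u=I+A,\ v=I+B,\ X=A+B+BA,
\]
reducing matters to a bound on $AB-BA$; the $p$-th power estimate comes from $(I+A)^p=I+\sum_{k=1}^p\binom{p}{k}A^k$ together with $v_p\bigl(\binom{p}{k}\bigr)=1$ for $1\leq k<p$ and multiplicativity of $v_p$ for $A^p$.

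Given these two axioms, a straightforward induction on the inductive definition of the lower $p$-central filtration yields $G(n,p)^{(i,p)}\subseteq F_i$, so in particular $G(n,p)^{(n,p)}=\{I\}$. By functoriality, every homomorphism $\varphi\colon S\to G(n,p)$ satisfies $\varphi(S^{(n,p)})=\{I\}$, giving the easy inclusion $S^{(n,p)}\subseteq\bigcap\Ker(\varphi)$. The reverse inclusion is precisely the content of Theorem \ref{theorem qqq} applied to the family $\{G(n,p)\}$ equipped with the filtration $\{F_i\}$.

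The main technical obstacle is the coupled bookkeeping of $p$-adic valuation and distance from the diagonal in the two filtration estimates. Because the defining divisibility condition $a_{ij}\in p^{j-i}\dbZ/p^n\dbZ$ of $G(n,p)$ interlocks these two quantities, the standard Lie-theoretic ``weight equals valuation plus distance'' shortcut does not apply cleanly, and one must carry both parameters through the case analysis in parallel. Once this is done, the argument is formal; the conceptual point is that the exponents $p^{j-i}$ are calibrated exactly so that the lower $p$-central filtration of $G(n,p)$ terminates at step $n$, matching the index in the statement.
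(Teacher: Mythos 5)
Your argument for the easy inclusion $S^{(n,p)}\subseteq\bigcap\Ker(\varphi)$ is sound: the filtration $F_i$ you define does satisfy $[F_i,F_j]\subseteq F_{i+j}$ and $F_i^p\subseteq F_{i+1}$ (the entrywise valuation lemma and the binomial/commutator estimates all check out), so $G(n,p)^{(n,p)}=\{I\}$ and functoriality gives this direction. This is in fact a self-contained proof of what the paper only derives afterwards as Corollary \ref{triviality of subgroups}(d), so this half is a legitimate (and slightly different) route.

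The reverse inclusion, however, has a genuine gap. Theorem \ref{theorem qqq} does not take as input a group filtration on the target; its data are a ring homomorphism $\theta\colon R_0\to R$ and the ideal chain $J_t=d^tR$, and its conclusion is $\KerInt(S,\dbU_n(\calJ))=\Lam_{R_0,A}\inv(L_\calJ)$ --- an identification of the intersection of kernels with a set of elements of $S$ defined by congruence conditions on their \emph{Magnus coefficients}, not with $S^{(n,p)}$. So even after you instantiate it correctly (which your proposal does not do explicitly: one must take $R_0=\dbZ$, $R=\dbZ/p^n\dbZ$, $d=p$, check that $\dbU_n(\calJ)=G(n,p)$, and compute $\Ann_R(J_t)=p^{n-t}\dbZ/p^n\dbZ$, so that $L_\calJ$ is the set of series with $c_\emptyset=1$ and $c_I\in p^{n-|I|}\dbZ$ for $1\leq|I|<n$, i.e.\ $L_\calJ=1+(p\dbZ+D)^n$ with $D$ the augmentation ideal), you are left with $\bigcap\Ker(\varphi)=\Lam_{\dbZ,A}\inv\bigl(1+(p\dbZ+D)^n\bigr)$ and still need the nontrivial fact that this Magnus-theoretic preimage equals (or at least is contained in) $S^{(n,p)}$. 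That identification is Koch's theorem \cite{Koch60} and is the essential external ingredient of the proof; nothing in your proposal supplies it or a substitute for it. Your filtration $\{F_i\}$ on $G(n,p)$ cannot play this role, since it only ever yields the easy inclusion. As written, the hard direction is therefore not proved.
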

This result holds in both the discrete and profinite settings.
The main tool we use is the Magnus representation of $S$ by formal power series,
and a description of $S^{(n,p)}$ by means of this representation, due to Koch \cite{Koch60}.

The motivation for this work comes from Galois theory, where results of this nature
were studied for absolute Galois groups of fields.
More specifically, let $F$ be a field containing a root of unity of order $p$ and let $G=G_F$ be its absolute Galois group with the (profinite) Krull topology.
For a list $\calL$ of finite groups let $G_\calL=\bigcap\Ker(\varphi)$, where the intersection is over all
continuous epimorphisms $\varphi\colon G\to\bar G$ where $\bar G\in\calL$.
Let $D_4$ be the dihedral group of order $8$, and for $p$ odd let $H_{p^3}$ (resp., $M_{p^3}$)
be the unique non-abelian group of order $p^3$ and exponent $p$ (resp., $p^2$).
The following facts for $n=3$ were proved by Min\'a\v c, Spira, and the author:
\begin{enumerate}
\item[(i)]
$G^{(3,2)}=G_{(3,2)}=G_\calL$ for $\calL=\{1,\dbZ/2\dbZ, \dbZ/4\dbZ,D_4\}$ \cite{MinacSpira96} (see also \cite{EfratMinac13}*{Remark 2.1(1)});
\item[(ii)]
For $p>2$, $G_{(3,p)}=G_\calL$, where  $\calL=\{1,\dbZ/p\dbZ,H_{p^3}\}$ \cite{EfratMinac13};
\item[(iii)]
For $p>2$, $G^{(3,p)}=G_\calL$, where  $\calL=\{1,\dbZ/p^2\dbZ,M_{p^3}\}$ \cite{EfratMinac11}.
\end{enumerate}
The result (2) above extends (i) and (ii) to higher subgroups $G_{(n,p)}$ in the $p$-Zassenhaus filtration, at least in the case of free pro-$p$ groups.
Our Theorem solves the same problem for the lower $p$-central filtration $G^{(n,p)}$.

The paper is organized as follows:
In \S2 we recall some basic facts on free pro-$\calC$ groups on a (possibly infinite) basis $A$, where $\calC$ is a full formation of finite groups.
In \S3 and \S4 we extend the classical construction of the Magnus algebra $R_0\langle\langle X_A\rangle\rangle\rangle$ 
and Magnus homomorphism $\Lam_{R_0,A}\colon S\to R_0\langle\langle X_A\rangle\rangle\rangle$ from the discrete setting to the setting of free pro-$\calC$ groups,
where $R_0$ is a unital ring.
The heart of the proof is given in \S5.
There we consider a ring homomorphism $\theta\colon R_0\to R$ and a system $\calJ=(J_k)_{k=0}^{n-1}$ of ideals in $R$ satisfying some natural conditions.
This gives rise to the ring $T_{n,0}(\calJ)$ of upper-triangular $n\times n$-matrices whose $(i,j)$-entries lie in $J_{j-i}$, and to the group $\dbU_n(\calJ)$ of unipotent matrices in $T_{n,0}(\calJ)$.
Every representation $\varphi\colon S\to\dbU_n(\calJ)$ lifts in a canonical way to a ring homomorphism $\hat\varphi\colon R_0\langle\langle X_A\rangle\rangle\to T_{n,0}(\calJ)$ which is $\theta I_n$ on $R_0$
and such that the following square commutes (see Lemma \ref{sss}):
\begin{equation}
\label{com square}
\xymatrix{
S\ar[r]^{\varphi}\ar[d]_{\Lam_{R_0,A}} & \dbU_n(\calJ)\ar@{_(->}[d] \\
R_0\langle\langle X_A\rangle\rangle \ar^{\hat\varphi}[r] & T_{n,0}(\calJ).
}
\end{equation}
Under the additional assumption that $J_t=d^tR$ for some $d\in R$, $t=0,1\nek n-1$, we use this lifting to identify the Magnus expansions 
of the elements of $\bigcap\Ker(\varphi)$, where $\varphi$ ranges over all such representations (Theorem \ref{theorem qqq}).
Finally, in \S6 we apply this general result to obtain intersection theorems in all the above-mentioned special cases.

I warmly thank J\'an Min\'a\v c for many discussions that motivated this work.
I also thank the referee for his helpful comments and suggestions.

After an earlier version of this paper was posted on the arXiv, J\'an Min\'a\v c and Nguyen Duy Tan sent me
an advanced draft of their preprint \cite{MinacTan2}, where they prove a result in the spirit of the Theorem above,
but with the target group $G(n,p)$ replaced by the groups $\dbU_{k+1}(\dbZ/p^{n-k}\dbZ)$, $k=1\nek n - 1$.
They also recover Gr\"un's result (1).
Their general approach was to use the Magnus theory in combination with \cite{Koch02}*{Th.\ 7.14}, which is the group ring analog of \cite{Koch60}.

\section{Free profinite groups}
Let $\calC$ be a \textbf{full formation} of finite groups, i.e., a non-empty family of finite groups closed under subgroups, epimorphic images and extensions (in the sense that if $N$ is a normal subgroup of a group $G$ and $N,G/N\in\calC$, then $G\in\calC$; see \cite{FriedJarden08}*{\S17.3}).
We recall from \cite{FriedJarden08}*{\S17.4} the following terminology and facts about pro-$\calC$ groups.

Let $G$ be a pro-$\calC$ group and $A$ a set.
A map $\varphi\colon A\to G$  \textbf{converges to $1$} if for every open normal subgroup $N$ of $G$,
the set $A\setminus\varphi\inv(N)$ is finite.

We say that a pro-$\calC$ group $S$ is a \textbf{free pro-$\calC$ group on basis $A$} with respect to a map $\iota\colon A\to S$ if
\begin{enumerate}
\item[(i)]
$\iota\colon A\to S$ converges to $1$;
\item[(ii)]
$\iota(A)$ generates $S$;
\item[(iii)]
For every pro-$\calC$ group $G$ and every map $\varphi\colon A\to G$ converging to $1$, there
is a unique continuous homomorphism $\hat\varphi\colon S\to G$ with
$\varphi=\hat\varphi\circ\iota$ on $A$.
\end{enumerate}
A free pro-$\calC$ group on $A$ exists, and is unique up to a unique continuous isomorphism.
We denote it  by  $S_A(\calC)$.
Necessarily, $\iota$ is injective, and we identify $A$ with its image in $S_A(\calC)$.
In particular, we write $\hat\dbZ_\calC$ for the free pro-$\calC$ group on one generator.

\section{The Magnus Algebra}
Let $A$ be a set and $A^*$ the set of all finite sequences of elements of $A$.
We denote the empty sequence by $\emptyset$.
For each $a\in A$ let $X_a$ be a variable.
For $I=(a_1\nek a_t)\in A^*$ let $X_I$ denote the formal product $X_{a_1}\cdots X_{a_t}$ (by convention $X_\emptyset=1$).
Let $R$ be a unital commutative ring with additive group $R_+$.
The \textbf{Magnus algebra} $R\langle\langle X_A\rangle\rangle$
over  $R$ is the ring of all formal power series
$\sum_{I\in A^*}c_IX_I$, where $c_I\in R$, with the natural operations.
We identify $R$ as the subring of $R\langle\langle X_A\rangle\rangle$ of all constant power series.
Let $R\langle\langle X_A\rangle\rangle^\times$ be the multiplicative group of $R\langle\langle X_A\rangle\rangle$, and for a positive integer $n$ let $V_{R,A,n}$ be the set of all power series in $R\langle\langle X_A\rangle\rangle$ of the form $1+\sum_{|I|\geq n}c_IX_I$.
For $1+\alp\in V_{R,A,n}$ we have $(1+\alp)\inv=\sum_{k=0}^\infty(-1)^k\alp^k$, showing that $V_{R,A,n}$ is in fact a normal subgroup of $V_{R,A,1}$.
We have
\begin{equation}
\label{invlim}
V_{R,A,1}\isom\invlim V_{R,A,1}/V_{R,A,n}.
\end{equation}

\begin{lem}
\label{direct product}
For every positive integer $n$ there is a group isomorphism
\[
\Psi_n\colon
\prod_{|I|=n}R_+\to V_{R,A,n}/V_{R,A,n+1},  \quad
(c_I)_{|I|=n}\mapsto(1+\sum_{|I|=n}c_IX_I)V_{R,A,n+1}.
\]
\end{lem}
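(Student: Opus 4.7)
The plan is to verify directly that the map $\Psi_n$ is a well-defined group homomorphism, then check injectivity and surjectivity by reading off the degree-$n$ coefficients.

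First, I would check that $\Psi_n$ is a homomorphism. Take $c=(c_I)_{|I|=n}$ and $d=(d_I)_{|I|=n}$ in $\prod_{|I|=n}R_+$. In the Magnus algebra,
\[
\Bigl(1+\sum_{|I|=n}c_IX_I\Bigr)\Bigl(1+\sum_{|I|=n}d_IX_I\Bigr)
=1+\sum_{|I|=n}(c_I+d_I)X_I+\sum_{|I|=|J|=n}c_Id_JX_IX_J.
\]
Each cross term $c_Id_JX_IX_J$ has degree $2n\geq n+1$, so the product lies in the same coset mod $V_{R,A,n+1}$ as $1+\sum_{|I|=n}(c_I+d_I)X_I$. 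Thus $\Psi_n(c+d)=\Psi_n(c)\Psi_n(d)$, and $\Psi_n$ is a group homomorphism from the additive group $\prod_{|I|=n}R_+$ into $V_{R,A,n}/V_{R,A,n+1}$.

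Next, injectivity: if $(1+\sum_{|I|=n}c_IX_I)\in V_{R,A,n+1}$, then by definition the coefficient of each $X_I$ with $|I|=n$ vanishes, forcing $c_I=0$ for all such $I$. For surjectivity, let $1+\alpha\in V_{R,A,n}$ with $\alpha=\sum_{|I|\geq n}c_IX_I$. Set $u=1+\sum_{|I|=n}c_IX_I\in V_{R,A,n}$. Since $u-1$ has order $\geq n$, the inverse series
\[
u\inv=\sum_{k=0}^\infty(-1)^k(u-1)^k=1-\sum_{|I|=n}c_IX_I+\beta
\]
with $\beta\in R\langle\langle X_A\rangle\rangle$ consisting of terms of degree $\geq 2n\geq n+1$. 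Multiplying out $u\inv(1+\alpha)$, all degree-$n$ contributions cancel, so $u\inv(1+\alpha)\in V_{R,A,n+1}$. Hence $(1+\alpha)V_{R,A,n+1}=uV_{R,A,n+1}=\Psi_n((c_I)_{|I|=n})$.

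There is no real obstacle here; the only point that requires a moment of care is the cancellation of degree-$n$ terms in the products above, which uses the hypothesis $n\geq 1$ so that $2n\geq n+1$. The statement that $V_{R,A,n+1}\trianglelefteq V_{R,A,n}$ has already been noted after \eqref{invlim}, so the quotient makes sense as a group, and the argument shows it is abelian with the described description.
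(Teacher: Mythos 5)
Your proof is correct and follows essentially the same route as the paper: the paper dismisses the homomorphism and injectivity checks as straightforward (which you spell out by noting the cross terms have degree $2n\geq n+1$ and by reading off degree-$n$ coefficients), and its surjectivity argument is the same cancellation of the degree-$n$ part after multiplying by $(1+\sum_{|I|=n}c_IX_I)^{\pm1}$.
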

\begin{proof}
It is straightforward to see that $\Psi_n$ is a group homomorphism and is injective.
For the surjectivity, let $1+\sum_{|I|\geq n}c_IX_I\in V_{R,A,n}$.
Then
\[
\begin{split}
(1+\sum_{|I|\geq n}c_IX_I)(1+\sum_{|I|=n}c_IX_I)\inv
&=1+(\sum_{|I|\geq n+1}c_IX_I)(1+\sum_{|I|=n}c_IX_I)\inv \\
&\in 1+(\sum_{|I|\geq n+1}c_IX_I)V_{R,A,n}\subseteq V_{R,A,n+1}.
\end{split}
\]
Hence $\Psi((c_I)_{|I|=n})=(1+\sum_{|I|\geq n}c_IX_I)V_{R,A,n+1}$.
\end{proof}

The map $\sum_Ic_IX_I\mapsto(c_I)_I$ identifies $R\langle\langle X_A\rangle\rangle$ with $\prod_{I\in A^*}R$.
When $R$ is a profinite topological ring, this induces on the additive group of $R\langle\langle X_A\rangle\rangle$ a profinite (product) topology.
Moreover, the multiplication map in  $R\langle\langle X_A\rangle\rangle$ is continuous,
making it a profinite topological ring.
Then the isomorphisms (\ref{invlim}) and $\Psi_n$ are continuous.

\section{The Magnus homomorphism}
Here we separate the discussion into the discrete case and the profinite case.

Suppose that $A$ is a finite set and $S$ is a free discrete group on the basis $A$.
Let $R$ be a (discrete) unital commutative ring.
We define the \textbf{Magnus homomorphism}
$\Lam=\Lam_{R,A}\colon S\to R\langle\langle X_A\rangle\rangle^\times$
 by $\Lam(a)=1+X_a$ for $a\in A$.
It is known to be injective (\cite{Magnus35}*{Satz I}, \cite{SerreCG}*{I-\S1.4}).

Next we define the Magnus homomorphism in a profinite setting.
Let $\Pi$ be a set of prime numbers and let $\calC=\calC(\Pi)$ be the family of all finite $\Pi$-groups, i.e., finite groups whose order is a product of primes in $\Pi$.
It is a full formation.
Let $R=\hat \dbZ_\calC=\prod_{p\in \Pi}\dbZ_p$ and note that it is a profinite ring.

\begin{lem}
\label{pro-C}
In this setup, $V_{R,A,1}$ is a pro-$\calC$ group.
\end{lem}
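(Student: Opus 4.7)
The plan is to use the filtration $V_{R,A,n}$ together with the inverse limit description from (\ref{invlim}), and to check at each stage that we stay inside the class of pro-$\calC$ groups.

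First I would verify that the additive group $R_+$ itself is a pro-$\calC$ group. Since $\calC=\calC(\Pi)$ is a full formation, each $\dbZ_p$ with $p\in \Pi$ is a pro-$p$ group and hence a pro-$\calC$ group, and pro-$\calC$ groups are closed under arbitrary Cartesian products (a product of pro-$\calC$ groups is the inverse limit of its finite sub-products, each of which lies in $\calC$ because $\calC$ is closed under extensions and hence under finite products). Thus $R_+=\prod_{p\in\Pi}\dbZ_p$ is pro-$\calC$, and by the same product argument, $\prod_{|I|=n}R_+$ is pro-$\calC$ for every $n$.

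Next I would show that each quotient $V_{R,A,1}/V_{R,A,n}$ is a pro-$\calC$ group by induction on $n$. The case $n=1$ is trivial. For the inductive step, Lemma \ref{direct product} provides an isomorphism
\[
V_{R,A,n}/V_{R,A,n+1}\isom \prod_{|I|=n}R_+,
\]
so the short exact sequence
\[
1\to V_{R,A,n}/V_{R,A,n+1}\to V_{R,A,1}/V_{R,A,n+1}\to V_{R,A,1}/V_{R,A,n}\to 1
\]
exhibits $V_{R,A,1}/V_{R,A,n+1}$ as an extension of pro-$\calC$ groups. Since $\calC$ is closed under extensions, any finite continuous quotient of such an extension sits in $\calC$ (lifting the kernel of the quotient map gives an extension in $\calC$ of a quotient of $V_{R,A,1}/V_{R,A,n}$ by a quotient of $V_{R,A,n}/V_{R,A,n+1}$), so $V_{R,A,1}/V_{R,A,n+1}$ is pro-$\calC$. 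These quotients make sense topologically because $R\langle\langle X_A\rangle\rangle$ carries the profinite product topology, in which each $V_{R,A,n}$ is closed.

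Finally, the isomorphism (\ref{invlim}) expresses $V_{R,A,1}$ as an inverse limit of pro-$\calC$ groups, and inverse limits of pro-$\calC$ groups are pro-$\calC$. The main subtlety, rather than a serious obstacle, is to check that closure of $\calC$ under extensions passes to the pro-$\calC$ setting for possibly infinite $A$; this reduces to the finite-quotient characterization sketched above.
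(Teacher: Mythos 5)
Your proof is correct and follows essentially the same route as the paper: identify the graded pieces $V_{R,A,n}/V_{R,A,n+1}$ with $\prod_{|I|=n}R_+$ via Lemma \ref{direct product}, induct on $n$ using the extension and closure of $\calC$ under extensions to get that each $V_{R,A,1}/V_{R,A,n}$ is pro-$\calC$, and conclude via the inverse limit (\ref{invlim}); you simply spell out details the paper leaves implicit. One small wording slip: a product of pro-$\calC$ groups is the inverse limit of finite products of \emph{finite quotients} of the factors (the finite sub-products $\prod_{p\in F}\dbZ_p$ themselves are not in $\calC$), but the intended standard argument is clear and correct.
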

\begin{proof}
By Lemma \ref{direct product}, $V_{R,A,n}/V_{R,A,n+1}$ is a pro-$\calC$ group for every $n$.
Using the extension
\[
1\to V_{R,A,n}/V_{R,A,n+1}\to V_{R,A,1}/V_{R,A,n+1}\to V_{R,A,1}/V_{R,A,n}\to 1
\]
we conclude by induction that $V_{R,A,1}/V_{R,A,n}$ is a pro-$\calC$ group for every $n$.
Now use the isomorphism (\ref{invlim}).
\end{proof}

For a finite subset $B$ of $A$,  $V_{R,A\setminus B,1}$ is a closed subgroup of $V_{R,A,1}$.
We have $\bigcap_BV_{R,A\setminus B,1}=\{1\}$.
Hence every open normal subgroup of $V_{R,A,1}$ contains some $V_{R,A\setminus B,1}$ \cite{RibesZalesskii10}*{Prop.\ 2.1.5(a)}.
If $a\in A$ and $1+X_a\not\in V_{R,A\setminus B,1}$, then $a\in B$.
Therefore the map $A\to V_{R,A,1}$, $a\mapsto 1+X_a$, converges to $1$.
In view of Lemma \ref{pro-C}, it extends to the continuous \textbf{Magnus homomorphism}
$\Lam_{\calC,A}\colon S_A(\calC)\to V_{R,A,1}\leq R\langle\langle X_A\rangle\rangle^\times$.

\section{Unipotent representations}
We  fix $n\geq1$ and a unitary commutative ring $R$.
We write $I_n$ for the $n\times n$ identity matrix.

Let $\calJ=(J_k)_{k=0}^{n-1}$ be a sequence of ideals in $R$ such that
$R=J_0\supseteq J_1\supseteq J_2\supseteq\cdots\supseteq J_{n-1}$ and $J_kJ_l\subseteq J_{k+l}$ for every $k,l\geq0$ with $k+l\leq n-1$.
Given a non-negative integer $t$, let $T_{n,t}(\calJ)$ be the set of all  $n\times n$ matrices $(a_{ij})$ over $R$
such that
\begin{enumerate}
\item[(i)]
$a_{ij}=0$ for every $1\leq i,j\leq n$ such that $j-i\leq t-1$;
\item[(ii)]
$a_{ij}\in J_{j-i}$ for every $1\leq i\leq j\leq n$.
\end{enumerate}

\begin{rems}
\label{rems on T}
\rm
\begin{enumerate}
\item[(1)]
$T_{n,0}(\calJ)$ is a unital $R$-algebra with respect to the usual operations.
\item[(2)]
$T_{n,t}(\calJ)=\{0\}$ for $n\leq t$.
\item[(3)]
$\dbU_n(\calJ):=T_{n,0}(\calJ)\cap\dbU_n(R)=I_n+T_{n,1}(\calJ)$ is a multiplicative group.
\item[(4)]
The entries of matrices in $T_{n,t}(\calJ)$ are in $J_t$ when $0\leq t<n$.
\item[(5)]
$T_{n,t}(\calJ)T_{n,t'}(\calJ)\subseteq T_{n,t+t'}(\calJ)$ for $t,t'\geq 0$.
\end{enumerate}
\end{rems}

Now let $\theta\colon R_0\to R$ be a homomorphism of unital commutative rings.
Let $A$ be a finite set, and let $S$ be the free discrete group on basis $A$.

\begin{lem}
\label{sss}
Let $\varphi\colon S\to \dbU_n(\calJ)$ be a group homomorphism.
For every sequence $I=(a_1\nek a_t)\in A^*$ let $M_I=\prod_{k=1}^t(\varphi(a_k)-I_n)$
 (by convention $M_\emptyset=I_n$).
Then:
\begin{enumerate}
\item[(a)]
 $M_I\in T_{n,t}(\calJ)$.
\item[(b)]
There is a homomorphism of unital rings
\[
 \hat\varphi\colon R_0\langle\langle X_A\rangle\rangle\to T_{n,0}(\calJ), \quad
\sum_Ic_IX_I\mapsto\sum_{0\leq|I|<n}\theta(c_I)M_I.
\]
It satisfies $\hat\varphi(c)=\theta(c)I_n$ for $c\in R_0$, and $\varphi=\hat\varphi\circ\Lam_{R_0,A}$ on $S$,
and is the unique homomorphism with these properties (see diagram (\ref{com square})).
\end{enumerate}
\end{lem}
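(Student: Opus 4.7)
The plan is as follows. Part (a) is immediate from the preceding remarks: since $\varphi(a_k)\in\dbU_n(\calJ)=I_n+T_{n,1}(\calJ)$, each factor $\varphi(a_k)-I_n$ lies in $T_{n,1}(\calJ)$, and iterating the inclusion $T_{n,t}(\calJ)T_{n,t'}(\calJ)\subseteq T_{n,t+t'}(\calJ)$ (Remark \ref{rems on T}(5)) shows that the $t$-fold product $M_I$ lies in $T_{n,t}(\calJ)$ when $|I|=t$.

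For part (b), the first observation is that $T_{n,t}(\calJ)=\{0\}$ for $t\geq n$ (Remark \ref{rems on T}(2)), so part (a) forces $M_I=0$ whenever $|I|\geq n$. Since $A$ is finite, the expression $\sum_{0\leq|I|<n}\theta(c_I)M_I$ is then a finite sum landing in $T_{n,0}(\calJ)$, so no convergence issue arises and I can take this formula as the definition of $\hat\varphi$. Additivity, $R_0$-linearity, and the identity $\hat\varphi(c)=\theta(c)I_n$ on constant power series are immediate from the formula.

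The main step is multiplicativity, and I expect this to be the one point requiring care. The core identity is $M_{IJ}=M_IM_J$ for concatenation of sequences $I,J\in A^*$, which is built into the definition of $M_I$ as a product. Using it, one computes
\[
\hat\varphi\Bigl(\sum_Ic_IX_I\Bigr)\hat\varphi\Bigl(\sum_J d_JX_J\Bigr)=\sum_{|I|,|J|<n}\theta(c_I)\theta(d_J)M_IM_J,
\]
while $\hat\varphi$ applied to the product of the two series equals $\sum_{|K|<n}\sum_{IJ=K}\theta(c_I)\theta(d_J)M_IM_J$. The two expressions differ only by terms with $|I|+|J|\geq n$, and these vanish since $M_IM_J=M_{IJ}\in T_{n,|I|+|J|}(\calJ)=\{0\}$. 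This is the one place where the interaction between the truncation at $|I|<n$ and the convolution-multiplication of power series has to be handled carefully, so it is the main bookkeeping obstacle.

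Finally, $\hat\varphi(1+X_a)=I_n+M_{(a)}=\varphi(a)$ for every $a\in A$, so the group homomorphisms $\hat\varphi\circ\Lam_{R_0,A}$ and $\varphi$ agree on the basis $A$ and hence on all of $S$. Uniqueness is immediate: any such $\hat\varphi$ must send $c\in R_0$ to $\theta(c)I_n$ and $X_a=\Lam_{R_0,A}(a)-1$ to $\varphi(a)-I_n=M_{(a)}$; multiplicativity then forces $X_I\mapsto M_I$ for all $I$, and the vanishing $M_I=0$ for $|I|\geq n$ determines the value on an arbitrary power series via the displayed truncated sum.
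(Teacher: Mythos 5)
Your proposal is correct and follows essentially the same route as the paper: part (a) via Remark \ref{rems on T}(5), and part (b) by noting $M_I=0$ for $|I|\geq n$ (Remark \ref{rems on T}(2)), checking the homomorphism properties, verifying agreement with $\varphi$ on the generators $1+X_a$, and deducing uniqueness from the forced values on $R_0$ and on the $X_a$. Your explicit verification of multiplicativity via $M_{IJ}=M_IM_J$ and the vanishing of cross terms with $|I|+|J|\geq n$ simply fills in a step the paper leaves implicit.
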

\begin{proof}
(a) \quad
The matrices $\varphi(X_{a_1})-I_n\nek\varphi(X_{a_t})-I_n$ are in $T_{n,1}(\calJ)$.
By Remark \ref{rems on T}(5),  $M_I\in T_{n,1}(\calJ)^t\subseteq T_{n,t}(\calJ)$.

\medskip

(b)\quad
By (a) and Remark \ref{rems on T}(2),  $M_I=0$ when $|I|\geq n$.
It follows that  $\hat\varphi$ is a homomorphism.
By definition, $\hat\varphi(c)=\hat\varphi(cX_\emptyset)=\theta(c)M_\emptyset=\theta(c)I_n$, and for every generator $a\in A$ we have
\[
\varphi(a)=I_n+M_{(a)}=\hat\varphi(1+X_a)=(\hat\varphi\circ\Lam_{R_0,A})(a),
\] 
so $\varphi=\hat\varphi\circ\Lam_{R_0,A}$ on $S$.

Further, if $\hat\varphi'\colon R_0\langle\langle X_A\rangle\rangle\to T_{n,0}(\calJ)$ is another unital ring homomorphism satisfying the above properties, 
then $\hat\varphi'(c)=\theta(c)I_n=\hat\varphi(c)$ for every $c\in R_0$, and
$\hat\varphi'(X_a)=\hat\varphi'(\Lam_{R_0,A}(a)-1)=\varphi(a)-I_n=\hat\varphi(X_a)$ for every $a\in A$.
It follows that $\hat\varphi'=\hat\varphi$.
\end{proof}

Next let $L_\calJ$ be the set of all power series $\sum_Ic_IX_I$ in
$R_0\langle\langle X_A\rangle\rangle$ such that $c_\emptyset=1$ and $\theta(c_I)\in \Ann_R(J_t)$
for every sequence $I$ of length $1\leq t<n$.
Here $\Ann_R(J_t)$ denotes the annihilator of $J_t$ in $R$.

\begin{lem}
\label{lemma 2}
$\varphi(\Lam_{R_0,A}\inv(L_\calJ))=I_n$ for every group homomorphism $\varphi\colon S\to \dbU_n(\calJ)$.
\end{lem}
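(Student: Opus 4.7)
The plan is to pipe the hypothesis through the commutative square (\ref{com square}) of Lemma \ref{sss} and then compare annihilator information against the filtration $T_{n,t}(\calJ)$. Concretely, let $s \in S$ with $\Lam_{R_0,A}(s) = \sum_I c_I X_I \in L_\calJ$. By Lemma \ref{sss}(b),
\[
\varphi(s) = \hat\varphi\bigl(\Lam_{R_0,A}(s)\bigr) = \sum_{0\leq|I|<n}\theta(c_I)M_I,
\]
so it suffices to show that the $|I|=0$ term contributes $I_n$ and each term with $1\leq |I|<n$ vanishes.

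First, I would handle the constant term: since $c_\emptyset=1$ by definition of $L_\calJ$ and $M_\emptyset=I_n$ by convention, this term is simply $\theta(1)I_n = I_n$.

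Next, for each sequence $I$ with $1\leq |I|=t<n$, I would combine two ingredients. On the one hand, the definition of $L_\calJ$ gives $\theta(c_I)\in\Ann_R(J_t)$. On the other hand, Lemma \ref{sss}(a) gives $M_I\in T_{n,t}(\calJ)$, and Remark \ref{rems on T}(4) says every entry of a matrix in $T_{n,t}(\calJ)$ lies in $J_t$. Since $\theta(c_I)$ kills every element of $J_t$, the scalar multiple $\theta(c_I)M_I$ is the zero matrix entrywise.

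Assembling these observations yields $\varphi(s) = I_n + 0 = I_n$, proving the claim. There is no real obstacle here; the only thing to verify carefully is the entrywise annihilation, which is immediate from Remark \ref{rems on T}(4) once one notes that $\Ann_R(J_t)$ acts as zero on $J_t$ by definition. The result applies uniformly to every homomorphism $\varphi\colon S\to\dbU_n(\calJ)$, since all the estimates depend only on $\calJ$ and not on $\varphi$.
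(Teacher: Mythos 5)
Your proposal is correct and follows exactly the paper's own argument: push $\Lam_{R_0,A}(s)$ through $\hat\varphi$ via Lemma \ref{sss}, note $\theta(c_I)\in\Ann_R(J_t)$ annihilates the entries of $M_I\in T_{n,t}(\calJ)$ by Remark \ref{rems on T}(4), and observe that only the constant term $c_\emptyset M_\emptyset=I_n$ survives. No differences worth noting.
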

\begin{proof}
Let $\varphi$ be a homomorphism as above, and let $M_I\in T_{n,t}(\calJ)$ and $\hat\varphi$ be as in Lemma \ref{sss}.
Consider a power series $\sum_Ic_IX_I\in L_\calJ$.
If $I\in A^*$ is a sequence of length $1\leq t<n$, then $\theta(c_I)\in \Ann_R(J_t)$, and the entries of $M_I$ are in $J_t$
(Remark \ref{rems on T}(4)).
Hence $\hat\varphi(c_IX_I)=\theta(c_I)M_I=0$.
Consequently, $\hat\varphi(\sum_Ic_IX_I)=c_\emptyset M_\emptyset=I_n$.

Thus if $\sig\in S$ and $\Lam_{R_0,A}(\sig)\in L_\calJ$, then $\varphi(\sig)=\hat\varphi(\Lam_{R_0,A}(\sig))=I_n$.
\end{proof}

We define the \textsl{kernel intersection} of groups $H,G$ to be
\[
\KerInt(H,G)=\bigcap\{\Ker(\varphi)\ |\ \varphi\colon H\to G \text{\rm \ homomorphism}\}.
\]

\begin{thm}
\label{theorem qqq}
Suppose that $J_t=d^tR$, $t=0,1\nek n-1$, for some $d\in R$.
Then
\[
\Lam_{R_0,A}\inv(L_\calJ)=\KerInt(S,\dbU_n(\calJ)).
\]
\end{thm}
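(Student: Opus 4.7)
The plan is to establish the nontrivial inclusion $\KerInt(S,\dbU_n(\calJ))\subseteq\Lam_{R_0,A}\inv(L_\calJ)$ contrapositively; the reverse inclusion is already Lemma \ref{lemma 2}. Given $\sig\in S$ with $\Lam_{R_0,A}(\sig)\notin L_\calJ$, I aim to produce a homomorphism $\varphi\colon S\to\dbU_n(\calJ)$ satisfying $\varphi(\sig)\neq I_n$.

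Write $\Lam_{R_0,A}(\sig)=1+\sum_{|I|\geq1}c_IX_I$ with $c_I\in R_0$. Since $J_t=d^tR$, we have $\Ann_R(J_t)=\Ann_R(d^t)$, so the hypothesis produces a sequence $I=(a_1\nek a_t)\in A^*$ with $1\leq t<n$ and $\theta(c_I)\cdot d^t\neq 0$. Fix this $I$, and, writing $e_{ij}$ for the standard matrix unit, define on the basis
\[
\varphi(a)=I_n+d\sum_{\substack{1\leq k\leq t\\ a_k=a}}e_{k,k+1}\qquad(a\in A).
\]
Each nonzero off-diagonal entry of $\varphi(a)$ lies on the superdiagonal and equals $d\in J_1$, so $\varphi(a)\in\dbU_n(\calJ)$; by freeness of $S$ on $A$ this extends uniquely to a homomorphism $\varphi\colon S\to\dbU_n(\calJ)$.

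The conclusion now reduces to computing the $(1,t+1)$-entry of $\varphi(\sig)$. By Lemma \ref{sss}, $\varphi(\sig)=\sum_{|J|<n}\theta(c_J)M_J$, and for $J=(b_1\nek b_s)$ the construction gives
\[
M_J=d^s\sum e_{k_1,k_1+1}\cdots e_{k_s,k_s+1},
\]
where the sum runs over $(k_1\nek k_s)\in\{1\nek t\}^s$ with $a_{k_j}=b_j$ for each $j$. A product of matrix units $e_{k_1,k_1+1}\cdots e_{k_s,k_s+1}$ vanishes unless $k_{j+1}=k_j+1$ for all $j$, in which case it equals $e_{k_1,k_1+s}$. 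For this to be $e_{1,t+1}$ one needs $k_1=1$ and $s=t$, which together with $a_{k_j}=b_j$ forces $J=I$. Consequently the $(1,t+1)$-entry of $\varphi(\sig)$ equals $\theta(c_I)\cdot d^t$, which is nonzero by construction, whence $\varphi(\sig)\neq I_n$.

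The step I expect to be the real obstacle is choosing the correct tailored representation: the matrices $\varphi(a)$ must remember the entire target multi-index $I$ so that the shift-matrix-unit product combinatorics single out $I$ among all multi-indices of length $<n$. Once this choice is made, the hypothesis $J_t=d^tR$ enters precisely so that the Magnus coefficient $\theta(c_I)$ survives multiplication by the $d^t$ produced by the $t$ shift factors, and the rest is bookkeeping.
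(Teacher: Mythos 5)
Your proof is correct, and it is built on exactly the same key construction as the paper's: the tailored representation $\varphi(a)=I_n+d\sum_{a_k=a}e_{k,k+1}$ encoding the target multi-index $I$, combined with Lemma \ref{lemma 2} for the easy inclusion. The one organizational difference is that the paper argues directly, proving $d^t\theta(c_{I_0})=0$ for each $I_0$ by induction on $t$ (the induction hypothesis is needed there to kill the contributions of shorter multi-indices to \emph{other} matrix entries), whereas you read off only the $(1,t+1)$-entry of $\varphi(\sigma)$, where those shorter multi-indices cannot contribute at all; this lets you dispense with the induction entirely, a small but genuine streamlining.
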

\begin{proof}
Lemma \ref{lemma 2} gives the inclusion $\subseteq$.
For the opposite inclusion let $\sig\in\KerInt(S,\dbU_n(\calJ))$ and write
$\Lam_{R_0,A}(\sig)=\sum_Ic_IX_I$ (so $c_\emptyset=1$).
We need to show that $d^t\theta(c_{I_0})=0$ for every sequence $I_0=(l_1\nek l_t)\in A^*$ of length $1\leq t<n$.
We may assume inductively that $d^s\theta(c_I)=0$ for every sequence $I\in A^*$ of length $1\leq s<t$.

Let $E_{ij}$ be the $n\times n$ matrix over $R$ which is $1$ at entry $(i,j)$ and $0$ elsewhere.
We recall that $E_{ij}E_{i'j'}$ is $E_{ij'}$, if $i'=j$, and is $0$ otherwise.
Hence a product $E_{j_1,j_1+1}\cdots E_{j_s,j_s+1}$, where $1\leq j_1\nek j_s<n$, is non-zero if and only if $j_1\nek j_s$ are consecutive numbers,
and in this case it equals $E_{j_1,j_s+1}$.

For every $a\in A$ we define
\[
M_a=d\sum_jE_{j,j+1}\in T_{n,1}(\calJ),
\]
where the sum is over all $1\leq j\leq t$ such that $l_j=a$.
Since $S$ is free, there is a group homomorphism $\varphi\colon S\to\dbU_n(\calJ)$ such that $\varphi(a)=I_n+M_a$ for every $a\in A$.
Let $\hat\varphi\colon R_0\langle\langle X_A\rangle\rangle\to T_{n,0}(\calJ)$ be the ring homomorphism as in Lemma \ref{sss}(b).
By the assumption on $\sig$,
\[
0=\varphi(\sig)-I_n=\hat\varphi(\Lam_{R_0,A}(\sig)-1)=\sum \theta(c_I)M_I,
\]
where the sum is over all sequences $I=(a_1\nek a_s)\in A^*$ with $1\leq s<n$.

Given such a sequence $I$, the matrix $M_I=M_{a_1}\cdots M_{a_s}$ is the sum of all products
$d^sE_{j_1,j_1+1}\cdots E_{j_s,j_s+1}$ such that $1\leq j_1\nek j_s\leq t$ and  $l_{j_1}=a_1\nek l_{j_s}=a_s$.
We now break the computation into three cases:

\textsl{Case 1:}
 $1\leq s<t$. \quad
Then the induction hypothesis gives $d^s\theta(c_I)=0$, whence $d^s\theta(c_I)E_{j_1,j_1+1}\cdots E_{j_s,j_s+1}=0$.

\textsl{Case 2:}
 $s\geq t$, $(j_1\nek j_s)\neq(1\nek t)$.  \quad
Since $1\leq j_1\nek j_s\leq t$ this implies that $j_1\nek j_s$ are not consecutive numbers,
and therefore $E_{j_1,j_1+1}\cdots E_{j_s,j_s+1}=0$.

\textsl{Case 3:}
 $s=t$, $(j_1\nek j_t)=(1\nek t)$. \quad
Then $I=I_0$ and
\[
d^s\theta(c_I)E_{j_1,j_1+1}\cdots E_{j_s,j_s+1}=d^t\theta(c_{I_0})E_{1,t+1}.
\]

Altogether we obtain that
\[
0=\sum_{1\leq |I|<n}\theta(c_I)M_I=d^t\theta(c_{I_0})E_{1,t+1}.
\]
It follows that $d^t\theta(c_{I_0})=0$, as required.
\end{proof}

\begin{rems}
\label{remark on profinite analog}
\rm
(a) \quad
In the previous proof, if  $a$ does not appear in $I_0$, then $M_a=0$.

\medskip

(b) \quad
There is also a profinite analog of Theorem \ref{theorem qqq}:
Let $\Pi$ be a set of prime numbers and let $\calC$ be the formation of all finite $\Pi$-groups.
Let $R_0=\hat \dbZ_\calC$, let $R$ be a profinite ring whose additive group is pro-$\calC$, and let $\theta\colon R_0\to R$ be a continuous ring homomorphism.
We choose $d\in R$ and set $J_t=d^tR$ for $t=0,1\nek n-1$.
Then $\dbU_n(\calJ)$ is a pro-$\calC$ group.
Also let $S=S_A(\calC)$ be the free pro-$\calC$ group on basis $A$.
We note that $\Ann_R(J_t)$ is closed in $R$, so $L_\calJ$ is closed in the profinite ring $\dbZ_\calC\langle\langle X_A\rangle\rangle$.
Lemma \ref{sss}, Lemma \ref{lemma 2}, and Theorem \ref{theorem qqq} and their proofs hold almost without any changes, with homomorphisms understood to be continuous, with $\KerInt(S,\dbU_n(\calJ))$ replaced by
\[
\begin{split}
\KerInt_\cont&(S,\dbU_n(\calJ)) \\
&=\bigcap\{\Ker(\varphi)\ |\ \varphi\colon S\to\dbU_n(\calJ) \hbox{ continuous homomorphism}\},
\end{split}
\]
and using Remark (a) to see that the map $A\to \dbU_n(\calJ)$, $a\mapsto I_n+M_a$ converges to $1$.
\end{rems}

\section{Examples}

\begin{exam}
\label{exam 1}
\rm
Suppose that $R=R_0$ and $\theta$ is the identity map.
Take in Theorem \ref{theorem qqq} $d=1$.
Then $J_t=R$  and $\Ann_R(J_t)=\{0\}$ for $0\leq t\leq n-1$.
Thus $L_\calJ$ consists of all power series $1+\sum_{|I|\geq n}c_IX_I$ with $c_I\in R$.
Moreover,  here $\dbU_n(\calJ)=\dbU_n(R)$.
For a free discrete group $S$ on a finite set $A$ of generators Theorem \ref{theorem qqq} gives
\begin{equation}
\label{ttt}
\Lam_{R,A}\inv(L_J)=\KerInt(S,\dbU_n(R)).
\end{equation}
\end{exam}

\begin{exam}
\label{exam 2}
\rm
Take in Example \ref{exam 1} $R=R_0=\dbZ$.
As proved by Magnus \cite{Magnus37}*{Satz III}, Witt \cite{Witt37}, and Gr\"un \cite{Grun36}
(see \cite{SerreLie}*{Ch.\ IV, Th.\ 6.3} for a more modern approach),
  $\Lam_{\dbZ,A}\inv(L_\calJ)$ is the $n$-th term $S^{(n)}$ in the lower central filtration of $S$.
We deduce from (\ref{ttt}) that
\[
S^{(n)}=\KerInt(S,\dbU_n(\dbZ)).
 \]
This was proved in  \cite{Grun36};
see also the modern exposition of Gr\"un's work in \cite{Rohl85}, as well as the related work \cite{Magnus35}*{Satz VI}.
We remark that Gr\"un actually works with {\sl lower}-triangular unipotent matrices.
\end{exam}

\begin{exam}
\label{exam 3}
\rm
Let $\Pi$ be a set of prime numbers and let $\calC=\calC(\Pi)$ be the formation of all finite $\Pi$-groups.
Let $A$ be a set, and $S=S_A(\calC)$ the free pro-$\calC$ group on basis $A$.
We similarly obtain, in view of Remark \ref{remark on profinite analog}, that
\[
S^{(n)}=\KerInt_\cont(S,\dbU_n(\hat\dbZ_\calC)).
 \]
In particular, for a free profinite group $S$ we have
$S^{(n)}=\KerInt(S,\dbU_n(\hat\dbZ))$, and for a free pro-$p$ group $S$ we have $S^{(n)}=\KerInt(S,\dbU_n(\dbZ_p))$.
\end{exam}

\begin{exam}
\label{exam 4}
\rm
Let $R=R_0=\dbF_p$ and $S$ a free discrete group on a finite set $A$ of generators.
Then  $\Lam_{\dbF_p,A}\inv(L_\calJ)$ is the $n$-th term $S_{(n,p)}$ in the $p$-Zassenhaus filtration of $S$ (compare \cite{Vogel05}*{Lemma 2.19(ii)}, \cite{Morishita12}*{\S8.3}, \cite{Efrat14}*{Prop.\ 6.2}, and \cite{Jennings41}*{Th.\ 5.5}).
We conclude from (\ref{ttt}) that
\[
S_{(n,p)}=\KerInt(S,\dbU_n(\dbF_p)).
\]

Again, the same result holds when $S=S_\calC(A)$ is a free pro-$\calC$ group on a basis $A$,
with $\calC$ the formation of finite $\Pi$-groups for some set $\Pi$ of prime numbers.
In fact, this was proved in \cite{Efrat14}*{Th.\ A'} for any profinite group $S$
with $p$-cohomological dimension $\leq1$,  as a special case of a deeper cohomological result,
related to Massey products.

In the special case where $n=3$,  we note that $\dbU_3(\dbF_2)=D_4$, and $\dbU_3(\dbF_p)=H_{p^3}$ when $p>2$.
Hence the list $\calL$ of all subgroups of  $\dbU_3(\dbF_p)$ consists of
$\{1\},\dbZ/2\dbZ,\dbZ/4\dbZ,D_4$, when $p=2$, and $\{1\},\dbZ/p\dbZ,(\dbZ/p\dbZ)^2,H_{p^3}$, when $p>2$.
For a free profinite group $S$ we deduce that $S_{(3,p)}=\KerInt(S,\dbU_3(\dbF_p))=S_\calL$ (with notation as in the Introduction).
Furthermore, when $p>2$ we may replace here $\calL$ by $\calL'=\{\{1\},\dbZ/p\dbZ,H_{p^3}\}$.
This recovers the results of \cite{MinacSpira96} and \cite{EfratMinac13} mentioned in the Introduction (for $G=S$).
\end{exam}

\begin{exam}
\label{exam 5}
\rm
Let $S$ be a \textsl{free pro-$p$ group}.
We recall that $\dbU_n(\dbF_p)$ is a $p$-Sylow subgroup of $\GL_n(\dbF_p)$ (see e.g., \cite{RibesZalesskii10}*{Ex.\ 2.3.12}).
It follows from Example \ref{exam 4} that
\[
S_{(n,p)}=\KerInt(S,\GL_n(\dbF_p)).
\]
\end{exam}

\begin{exam}
\label{exam 6}
\rm
Let $p$ be a prime number, $R_0=\dbZ$,  and $R=\dbZ/p^n\dbZ$, and $\theta\colon\dbZ\to\dbZ/p^n\dbZ$ the natural epimorphism.
We take in Theorem \ref{theorem qqq} $d=p+p^n\dbZ\in R$, so $J_t=p^t\dbZ/p^n\dbZ\subseteq R$ and $\Ann_R(J_t)=p^{n-t}\dbZ/p^n\dbZ$ for $0\leq t\leq n-1$.
Thus, in the terminology of the Introduction, $\dbU_n(\calJ)=G(n,p)$, and $L_\calJ$ consists of all power series $1+\sum_Ic_IX_I$ 
in $\dbZ\langle\langle X_A\rangle\rangle$ such that $c_I\in p^{n-|I|}\dbZ$ for $1\leq|I|<n$,
and $c_I\in\dbZ$ for $|I|\geq n$.

Let $S$ be a discrete free group on a finite set $A$ of generators.
Let $D$ be the two-sided ideal in $\dbZ\langle\langle X_A\rangle\rangle$ generated by $X_a$, $a\in A$.
Then $L_\calJ=1+\sum_{1\leq t\leq n}p^{n-t}D^t$, where $D^t$ is the ideal of all sums of products of $t$ elements
of $D$.
We observe that  $L_\calJ=1+(p\dbZ+D)^n$.
By a theorem of Koch \cite{Koch60},  $\Lam_{\dbZ,A}\inv(1+(p\dbZ+D)^n)$ is the $n$-th term
$S^{(n,p)}$ in the lower $p$-central series of $S$.
Combining this with Theorem \ref{theorem qqq}, we obtain the Theorem from the Introduction:
\[
S^{(n,p)}=\KerInt(S,G(n,p)).
\]
A similar result holds for a free pro-$\calC$ group $S=S_A(\calC)$, with $\calC$ the formation of all finite $\Pi$-groups for set $\Pi$ of prime numbers (of course, here $S^{(n,p)}$ is in the profinite sense):
\[
S^{(n,p)}=\KerInt_\cont(S,G(n,p)).
\]
\end{exam}

We list some consequences of these examples (the first three facts seem to be well-known):

\begin{cor}
\label{triviality of subgroups}
\begin{enumerate}
\item[(a)]
In the discrete setting, $\dbU_n(\dbZ)^{(n)}=1$.
\item[(b)]
In the profinite setting, $\dbU_n(\dbZ_\calC)^{(n)}=1$.
\item[(c)]
$\dbU_n(\dbF_p)_{(n,p)}=1$.
\item[(d)]
$G(n,p)^{(n,p)}=1$.
\end{enumerate}
\end{cor}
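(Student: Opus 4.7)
The plan is to derive all four parts uniformly from the corresponding examples of \S6, by combining the intersection characterizations there with the functoriality of the three filtrations under surjections. For each part, let $H$ denote the target group and choose a surjection $\pi\colon S\twoheadrightarrow H$ from a free group $S$ of the appropriate kind (a free discrete group on a finite basis for (a); a free pro-$\calC$ group $S_A(\calC)$ on a basis $A$ indexing a topological generating set for (b); and analogously in (c), (d), where $H$ is the finite $p$-group $\dbU_n(\dbF_p)$ or $G(n,p)$, respectively). Such a $\pi$ exists: $\dbU_n(\dbZ)$, $\dbU_n(\dbF_p)$, and $G(n,p)$ are finitely generated (the first two by their $n-1$ superdiagonal transvections $I_n+E_{i,i+1}$, with the remaining $I_n+E_{i,j}$ recovered by iterated commutators, and similarly for $G(n,p)$), while every pro-$\calC$ group is a continuous quotient of a free pro-$\calC$ group on some basis.

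A routine induction on the defining formulas (i)--(iii) of \S1 shows that each of the filtrations $G\mapsto G^{(n)}$, $G\mapsto G_{(n,p)}$, $G\mapsto G^{(n,p)}$ is preserved under surjective (continuous) homomorphisms, since $\pi$ carries commutators to commutators and $p$-th powers to $p$-th powers; in the profinite setting one also uses that the continuous image of a closed subgroup of a compact group is closed. Thus $\pi(S_\bullet)=H_\bullet$ with $S_\bullet$, $H_\bullet$ the respective terms of the relevant filtration. Now each of Examples~\ref{exam 2}, \ref{exam 3}, \ref{exam 4}, and \ref{exam 6} describes $S_\bullet$ as the intersection of the kernels of \emph{all} (continuous) homomorphisms from $S$ into $H$, so in particular $S_\bullet\subseteq\Ker(\pi)$. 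Applying $\pi$ yields $H_\bullet=\pi(S_\bullet)\subseteq\pi(\Ker(\pi))=\{1\}$, establishing each of (a)--(d).

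I do not foresee a substantive obstacle: the argument is a direct bootstrap from the four intersection theorems, and the only items to check are the existence of the surjection $\pi$ in each setting and the routine functoriality of the three filtrations under surjective (continuous) homomorphisms.
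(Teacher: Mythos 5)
Your argument is essentially identical to the paper's: choose an epimorphism $\pi$ from a suitable free group $S$ onto the target group, note that $\pi$ carries the $n$-th filtration term of $S$ onto that of the target, and invoke the relevant intersection theorem to see that the filtration term of $S$ lies in $\Ker(\pi)$. The extra details you supply (generation of the target groups, functoriality of the filtrations under surjections) are correct but are exactly the routine points the paper leaves implicit.
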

\begin{proof}
We prove (d).
Take a free group $S$ on sufficiently many generators and an epimorphism $\varphi\colon S\to G(n,p)$.
By the definition of the lower $p$-central filtration, it maps $S^{(n,p)}$ onto $G(n,p)^{(n,p)}$.
But the Theorem implies that $\varphi$ is trivial on $S^{(n,p)}$.

(a)--(c) are proved similarly, taking $S$ to be a free group in the relevant context, and using Examples \ref{exam 2},
\ref{exam 3}, and \ref{exam 4}, respectively.
\end{proof}

\begin{bibdiv}
\begin{biblist}

\bib{DixonDuSautoyMannSegal99}{book}{
title={Analytic Pro-$p$ Groups},
author={Dixon, J.D.},
author={du Sautoy, Marcus},
author={Mann, Avinoam},
author={Segal, D.},
publisher={Cambridge University Press},
series={Cambridge Stud. Adv. Math.},
volume={61},
date={1999},
label={DDMS}
}

\bib{Efrat14}{article}{
author={Efrat, Ido},
title={The Zassenhaus filtration, Massey products, and representations of profinite groups},
journal={Adv.\ Math.},
volume={263},
date={2014},
pages={389\ndash411},
}

\bib{EfratMinac11}{article}{
author={Efrat, Ido},
author={Min\' a\v c, J\'an},
title={On the descending central sequence of absolute Galois groups},
journal={Amer.\ J.\ Math.},
volume={133},
date={2011},
pages={1503\ndash1532},
}

\bib{EfratMinac13}{article}{
author={Efrat, Ido},
author={Min\' a\v c, J\'an},
title={Galois groups and cohomological functors},
date={2011},
status={to appear},
eprint={arXiv:1103.1508v2},
}

\bib{FriedJarden08}{book}{
   author={Fried, Michael D.},
   author={Jarden, Moshe},
   title={Field arithmetic},
   edition={3},
   publisher={Springer},
   place={Berlin},
   date={2008},
   pages={xxiv+792},
}

\bib{Grun36}{article}{
author={Gr\"un, Otto},
title={\"Uber eine Faktorgruppe freier Gruppen I},
journal={Deutsche Mathematik},
volume={1},
date={1936},
pages={772\ndash 782},}

\bib{Jennings41}{article}{
   author={Jennings, S. A.},
   title={The structure of the group ring of a $p$-group over a modular field},
   journal={Trans. Amer. Math. Soc.},
   volume={50},
   date={1941},
   pages={175--185},
}

\bib{Koch60}{article}{
    author={Koch, H.},
     title={\"Uber die Faktorgruppen einer absteigenden Zentralreihe},
     journal={Math.\ Nach.},
     volume={22},
     date={1960},
     pages={159\ndash161},
}

\bib{Koch02}{book}{
   author={Koch, Helmut},
   title={Galois Theory of $p$-Extensions},
      publisher={Springer-Verlag},
   place={Berlin},
   date={2002},
   pages={xiv+190},
}

\bib{Magnus35}{article}{
author={Magnus, Wilhelm},
title={Beziehungen zwischen Gruppen und Idealen in einem speziellen Ring},
journal={Math.\ Ann.},
volume={111},
date={1935},
pages={259\ndash280},
}

\bib{Magnus37}{article}{
author={Magnus, Wilhelm},
title={\"Uber Beziehungen zwischen h\"oheren Kommutatoren},
journal={J.\ reine angew. Math.},
volume={177},
date={1937},
pages={105\ndash115},
}

\bib{MinacSpira96}{article}{
author={Min{\'a}{\v {c}}, J{\'a}n},
author={Spira, Michel},
title={Witt rings and Galois groups},
journal={Ann. Math.},
volume={144},
date={1996},
pages={35\ndash60},
}

\bib{MinacTan2}{article}{
author={Min{\'a}{\v {c}}, J{\'a}n},
author={Tan, Nguyen Duy},
title={The Kernel Unipotent Conjecture and the vanishing of Massey products for odd rigid fields {\rm (with an appendix by I.\ Efrat, J.\ Min\'a\v c and N.D.\ T\^an)}},
date={2013},
eprint={arXiv:1312.2655},
}

\bib{Morishita12}{book}{
   author={Morishita, Masanori},
   title={Knots and Primes},
   series={Universitext},
   publisher={Springer},
   place={London},
   date={2012},
   pages={xii+191},
}

\bib{NeukirchSchmidtWingberg}{book}{
  author={Neukirch, J{\"u}rgen},
  author={Schmidt, Alexander},
  author={Wingberg, Kay},
  title={Cohomology of Number Fields, Second edition},
  edition={2},
  publisher={Springer},
  place={Berlin},
  date={2008},
}

\bib{RibesZalesskii10}{book}{
author={Ribes, Luis},
author={Zalesskii, Pavel},
title={Profinite Groups},
edition={2},
publisher={Springer},
date={2010},
}

\bib{Rohl85}{article}{
   author={R\"ohl, Frank},
   title={Review and some critical comments on a paper of Gr\"un concerning the dimension subgroup conjecture},
   journal={Bol.\ Soc.\ Braz.\ Mat.},
   volume={16},
   date={1985},
   pages={11\ndash27},
}

\bib{SerreLie}{book}{
author={Serre, Jean-Pierre},
title={Lie Algebras and Lie groups},
series={Lect.\ Notes Math.},
volume={1500},
publisher={Springer},
place={Berlin--Heidelberg},
date={1992},
}

\bib{SerreCG}{book}{
   author={Serre, Jean-Pierre},
   title={Galois Cohomology},
   series={Springer Monographs in Mathematics},
   publisher={Springer},
   place={Berlin},
   date={2002},
   pages={x+210},
}

\bib{Vogel05}{article}{
author={Vogel, Denis},
title={On the Galois group of $2$-extensions with restricted ramification},
journal={J.\ reine angew.\ Math.},
volume={581},
date={2005},
pages={117\ndash150},
}

\bib{Witt37}{article}{
   author={Witt, Ernst},
   title={Treue Darstellungen beliebiger Liescher Ringe},
   journal={J.\ reine angew.\ Math.},
   volume={177},
   date={1937},
   pages={152--160},
}

\end{biblist}
\end{bibdiv}

\end{document}